\newtheorem{theorem}{Theorem}[section]
\newtheorem{lemma}[theorem]{Lemma}
\newtheorem{corollary}[theorem]{Corollary}
\newcommand{\CA}{\mathcal{A}}
\newcommand{\CL}{\mathcal{L}}
\newcommand{\R}{\mathbb{R}}
\newcommand{\Z}{\mathbb{Z}}
\newcommand{\Q}{\mathbb{Q}}
\newcommand{\N}{\mathbb{N}}
\newcommand{\T}{\mathbb{T}}
\renewcommand{\P}{\mathcal{P}}
\newcounter{note}
\begin{document}

\title[Boshernitzan's condition]{Boshernitzan's condition, factor complexity, and an application}
\author{Van Cyr}
\address{Bucknell University, Lewisburg, PA 17837 USA}
\email{van.cyr@bucknell.edu}
\author{Bryna Kra}
\address{Northwestern University, Evanston, IL 60208 USA}
\email{kra@math.northwestern.edu}

\subjclass[2010]{37B10 (primary), 37B40, 35J10, 37A35}
\keywords{subshift, complexity, Schr\"odinger operator, uniquely ergodic}


\thanks{The first author thanks Northwestern for its hospitality while this work was completed and the second author was partially supported by NSF grant DMS-1800544.}

\begin{abstract}
Boshernitzan found a decay condition on the measure of cylinder sets that implies unique ergodicity for minimal subshifts.  Interest in the properties of subshifts satisfying this condition has grown recently, due to a connection with the study of discrete Schr\"odinger operators.  Of particular interest is the question of how restrictive Boshernitzan's condition is.  While it implies zero topological entropy, our main theorem shows how to construct minimal subshifts satisfying the condition  whose factor complexity grows faster than any pre-assigned subexponential rate.  
As an application, via a theorem of Damanik and Lenz, we show that there is no subexponentially growing sequence 
for which the spectra of all discrete Schr\"odinger operators associated with subshifts whose complexity grows faster than
the given sequence, have only finitely many gaps.
\end{abstract}

\maketitle

\section{Boshernitzan's complexity conditions}

For a symbolic dynamical system $(X,\sigma)$, many of the isomorphism invariants we have are statements about the growth rate of the {\em word complexity function $P_X(n)$}, 
which counts the number of distinct cylinder sets determined by words of length $n$ 
having nonempty intersection with $X$.  For example, the exponential growth of $P_X(n)$ is the topological entropy of $(X,\sigma)$, while the linear growth rate of $P_X(n)$ gives an invariant to begin distinguishing between zero entropy systems.  Of course there are different senses in which the growth of $P_X(n)$ could be said to be linear and different invariants arise from them.  For example, one can consider systems with {\em linear limit inferior growth}, meaning
 $\liminf_{n\to\infty}P_X(n)/n < \infty$,  or  the stronger condition of {\em linear limit superior growth}, meaning $\limsup_{n\to\infty}P_X(n)/n < \infty$. (There exist systems satisfying the first condition but not satisfying the second.) 
 
Under the assumption of linear limit inferior growth, 
 and with a further hypothesis that the system $(X,\sigma)$ is minimal, 
 Boshernitzan~\cite{boshernitzan} showed that the system only supports finitely many $\sigma$-invariant ergodic probability measures. 
 Boshernitzan also considered another version of linear complexity on a minimal shift, studying 
 {\em linear measure growth}, also referred to in the literature (see for example~\cite{DL}) as {\em condition (B)}: if $\mu$ is a $\sigma$-invariant Borel probability measure on $X$, assume that there exists a sequence 
 of integers $n_k \to \infty$ such that $$\inf_k \min_{|w|=n_k} n_k\mu([w]) > 0,$$  
 where $\mu([w])$ denotes the measure of the cylinder set determined by the word $w$ and $|w|$ denotes the length of the word $w$.  Boshernitzan showed that linear measure growth implies that the minimal subshift $(X,\sigma)$ is uniquely ergodic.  Another consequence of linear measure growth, for word complexity, is that  
 $\liminf_{n\to\infty}P_X(n)/n$ is finite.  
 
 Each of these three linear complexity assumptions, linear limit inferior growth, linear limit superior growth, and linear measure growth, immediately implies that the associated system has zero topological entropy.  It is natural to ask which of these conditions imply any of the others.  One of our main results is that while linear measure growth implies linear limit inferior growth, it does not imply linear limit superior growth.  In fact, we show linear measure growth is flexible enough that examples satisfying it can be constructed with limit superior growth faster than any pre-assigned subexponential growth rate. 

A second motivation for the construction we give comes from a question on the spectra of discrete Schr\"odinger operators that arise from a subshift.  If $(X,\sigma)$ is a shift, then each $x = (x_n)_{n\in\Z}\in X$ defines a discrete Schr\"odinger operator $H_x\colon\ell^2(\Z)\to\ell^2(\Z)$ by 
$$ 
(H_xu)(n):=u(n+1)+u(n-1)+x_nu(n) 
$$ 
(and $x$ is called the {\em potential function} for this operator).  Characterizing the spectra of discrete Schr\"odinger operators is an active field of study (e.g.,~\cite{DL,Avila-Krikorian,Bourgain-Jitomirskaya}) and we refer the reader to~\cite{Damanik2, Damanik} for excellent surveys on the theory of discrete Schr\"odinger operators associated 
with symbolic systems.  For operators built in this way, the dynamical properties of $(X,\sigma)$ can influence the spectral properties of $H_x$ for any $x\in X$.  When $(X,\sigma)$ is minimal, Damanik (personal communication) asked whether the condition that $h_{top}(X)>0$ implies that the spectrum of $H_x$ can have only finitely many gaps.  Our example shows that the assumption of positive 
entropy in this question cannot be relaxed to just ask that $P_X(n)$ grow ``nearly exponentially'' infinitely often: for any subexponential rate $\{a_n\}_{n=1}^{\infty}$ our example, via a theorem of Damanik and Lenz~\cite{DL}, gives a Schr\"odinger operator whose spectrum has infinitely many gaps and whose complexity is larger than $\{a_n\}_{n=1}^{\infty}$ infinitely often.

We turn to stating our main theorem.  For a word $w$ in the language of a subshift $(X, \sigma)$, we denote the cylinder set starting at zero it determines by $[w]_0^+$ and we denote the words of length $n$ in the language of the subshift by $\mathcal L_n(X)$ (for further discussion of the definitions, see Section~\ref{sec:background-symbolic}): 
\begin{theorem}
\label{th:main}
Let $\{a_n\}_{n=1}^{\infty}$ be a sequence of positive integers 
satisfying 
\begin{equation}\label{eq:growth} 
\limsup_{n\to\infty}\frac{1}{n}\log a_n=0. 
\end{equation} 
There exists a minimal and uniquely ergodic subshift $(Y, \sigma)$ such that 
$$
\limsup_{n\to\infty}\frac{P_Y(n)}{a_n}=\infty 
$$
and such that the unique invariant measure $\mu$ has the property that there is 
a sequence $\{n_k\}_{k=1}^{\infty}$ satisfying 
\begin{equation}
\label{eq:prop}
\liminf_{k\to\infty}\min\left\{\mu([w]_0^+)\cdot n_k\colon w\in\mathcal{L}_{n_k}(Y)\right\}>0. 
\end{equation}
\end{theorem}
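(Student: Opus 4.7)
My plan is to build $Y$ by an inductive, substitution-style construction: at stage $k$ I define a finite set $V_k$ of ``blocks'' of a common length $L_k$, where each element of $V_{k+1}$ is a concatenation of elements of $V_k$, and let $Y$ be the set of bi-infinite sequences all of whose finite factors are sub-words of some element of some $V_k$. The two conclusions of the theorem pull in opposite directions: Boshernitzan at length $n_k$ forces $P_Y(n_k) \leq n_k/c$, whereas exceeding $a_n$ requires many words at some other lengths; since $P_Y$ is monotone, these scales must be widely separated. The plan is therefore to pin the complexity down to linear only at chosen scales $n_k := L_k$, while allowing it to spike at intermediate scales in between.

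More concretely, I would choose $L_1 \ll L_2 \ll \cdots$ growing super-exponentially, together with an intermediate scale $m_k \in (L_k, L_{k+1})$ for each $k$, and then design the $V_k$'s so that: (a) each element of $V_{k+1}$ contains each element of $V_k$ with the same multiplicity, giving a primitive and balanced substitution structure; (b) $|V_k| \leq c L_k$; and (c) inside each element of $V_{k+1}$ there is a designated window whose length-$m_k$ sub-words realize at least $a_{m_k}$ distinct patterns, obtained for instance by inserting a richer combinatorial pattern there that is unique to this window. Minimality and unique ergodicity then follow from (a) by standard primitive-substitution or $\S$-adic arguments, and property (c) immediately gives $P_Y(m_k) \geq a_{m_k}$, so that $\limsup P_Y(n)/a_n = \infty$. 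To verify the Boshernitzan condition \eqref{eq:prop} at $n_k = L_k$, I would argue that every length-$L_k$ word in the language either lies inside, or straddles two adjacent copies of, an element of $V_k$; by (a) each such positional class has measure at least a constant times $1/|V_k|$, and by (b) this is at least $c/L_k$.

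The main obstacle is the quantitative compatibility of (b) and (c) across successive levels: since $P_Y$ is monotone, condition (b) at level $k+1$ requires $a_{m_k} \leq c L_{k+1}$, so the combinatorial richness injected at scale $m_k$ must be re-organized into at most $c L_{k+1}$ equivalence classes by the time we reach scale $L_{k+1}$. The subexponential hypothesis \eqref{eq:growth} on $\{a_n\}$ lets us pick $L_{k+1}$ arbitrarily large relative to $a_{m_k}$, so the numerical balance is achievable; the delicate step is positioning the high-complexity windows inside each $V_{k+1}$-block so that the windows themselves occur periodically enough for property (a) to persist at the next level, rather than destroying unique ergodicity or forcing $|V_{k+1}|$ past $c L_{k+1}$. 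Getting this interlocking right — a ``balanced outer skeleton'' hosting a ``high-complexity inner window'' whose variations are cycled through in a controlled order — is where the real work of the proof will lie.
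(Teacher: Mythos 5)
There is a genuine gap, and it sits exactly where the paper does its real work: the verification of \eqref{eq:prop}. Your condition (a) controls the frequencies of the level-$k$ blocks themselves, but the generic word of length $n_k=L_k$ is not a block: it straddles two adjacent blocks $v,v'\in V_k$ at one of $L_k$ possible offsets, and (under the recognizability you are implicitly assuming) its cylinder measure equals the frequency per position of that particular adjacent pair, which is $\tfrac{1}{L_k}\cdot\Pr(\text{adjacent pair}=(v,v'))$, not ``a constant times $1/|V_k|$'' --- the latter is dimensionally impossible, since the at least $L_k+1$ words of length $L_k$ have measures summing to $1$. So \eqref{eq:prop} at scale $L_k$ forces \emph{every} occurring adjacent pair to have proportion bounded below by a constant $c$, hence at most $1/c$ pairs and effectively a bounded alphabet of blocks; your allowance $|V_k|\le cL_k$ is incompatible with your own Boshernitzan argument. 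Worse, equal multiplicities at one level do not prevent some adjacent pair $(v,v')$ from occurring only once per level-$(k+j)$ block, with frequency $\sim 1/L_{k+j}\lll 1/L_k$, killing the liminf. In short, you need all words of length $n_k$ --- including every boundary word at every offset --- to be nearly equidistributed, uniformly over all later stages of the construction, and nothing in (a)--(c) delivers that.

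The paper's mechanism for this is the content you are missing: it seeds the construction with a Sturmian shift $X_\alpha$ for a \emph{well approximable} $\alpha$, so that by the Three Gap Theorem there are scales $n$ at which all $n+1$ Sturmian words have nearly equal frequency; the concatenation blocks are built from long Sturmian words so that the language (and, by unique ergodicity of $X_\alpha$, the frequency up to a controlled error) of the derived shifts at those scales coincides with that of $X_\alpha$. The Boshernitzan scales are then taken to be roughly $(\text{block length at level }k)\times(\text{balanced Sturmian scale})$, not the block length itself, and an inflation lemma shows the frequency ratios degrade only by a factor of $4$ per level, with two blocks per level throughout. Without some such source of uniform equidistribution at the designated scales, your ``balanced outer skeleton'' is not enough, and the step you defer as ``where the real work will lie'' is aimed at the wrong difficulty (the numerical balance of (b) and (c), which is indeed easy) rather than at the equidistribution of boundary words, which is the actual crux.
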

The hypothesis in this theorem is a type of subexponential growth on the sequence $\{a_n\}_{n=1}^{\infty}$ and the constructed system is a zero entropy system satisfying the Boshernitzan condition while the factor complexity grows faster than the given sequence.  
To prove the theorem, it suffices to show that the system $(Y, \sigma)$ supports a measure $\mu$ satisfying the property~\eqref{eq:prop}, as it then follows from Boshernitzan~\cite[Theorem 1.2]{boshernitzan2} that the system is uniquely ergodic.  

An immediate corollary of Theorem~\ref{th:main}, combined with a theorem of Damanik and Lenz~\cite[Theorem 2]{DL}, is the following: 
\begin{corollary}
Let $\{a_n\}_{n=1}^{\infty}$ be a sequence of positive integers which grows subexponentially in the sense of~\eqref{eq:growth}.  There exists a Cantor set $\Sigma\subset\mathbb{R}$, of Lebesgue measure zero, and a minimal subshift $(Y,\sigma)$ such that 
$$
\limsup_{n\to\infty}\frac{P_Y(n)}{a_n}=\infty 
$$
and for every $y\in Y$ the discrete Schr\"odinger operator $H_y\colon\ell^2(\Z)\to\ell^2(\Z)$ given by 
$$ 
(H_yu)(n):=u(n-1)+u(n+1)+y_nu(n) 
$$ 
and the spectrum of $H_y$ is exactly $\Sigma$. 
\end{corollary}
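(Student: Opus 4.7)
The corollary packages together Theorem~\ref{th:main} and \cite[Theorem 2]{DL}, so the plan is purely to verify that the hypotheses of the latter are satisfied by the subshift produced by the former. Given the subexponentially growing sequence $\{a_n\}_{n=1}^{\infty}$, I would first apply Theorem~\ref{th:main} to obtain a minimal, uniquely ergodic subshift $(Y,\sigma)$ whose complexity satisfies $\limsup_{n\to\infty} P_Y(n)/a_n = \infty$ and whose unique invariant measure $\mu$ satisfies the Boshernitzan condition~\eqref{eq:prop}. This immediately yields the subshift and the complexity assertion of the corollary.

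The next step is to show that the spectrum of $H_y$ does not depend on the choice of $y\in Y$, so that there is a well-defined common set $\Sigma\subset\R$ to be identified. This is the standard minimality argument in the theory of ergodic Schr\"odinger operators: the shift on $\ell^2(\Z)$ conjugates $H_y$ unitarily to $H_{\sigma y}$, so these operators have identical spectra; since $y\mapsto H_y$ is strongly continuous with upper semicontinuous spectrum, and since by minimality the $\sigma$-orbit of every point is dense in $Y$, the map $y\mapsto \operatorname{spec}(H_y)$ is forced to be constant. Denote its common value by $\Sigma$.

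To conclude, I would invoke \cite[Theorem 2]{DL}: for a minimal aperiodic subshift satisfying Boshernitzan's condition (B), the common Schr\"odinger spectrum $\Sigma$ is a Cantor subset of $\R$ of zero Lebesgue measure. Condition (B) in \cite{DL} is exactly~\eqref{eq:prop}, so it is supplied directly by Theorem~\ref{th:main}. Aperiodicity is automatic, since $\limsup_{n\to\infty} P_Y(n)/a_n = \infty$ with $a_n\ge 1$ forces $P_Y(n)\to\infty$, ruling out the possibility that $Y$ is a single periodic orbit. With minimality, aperiodicity, and condition (B) all in hand, \cite[Theorem 2]{DL} delivers precisely the Cantor-set, zero-Lebesgue-measure conclusion claimed for $\Sigma$.

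There is no genuine obstacle beyond matching conventions: one only needs to check that the version of Boshernitzan's condition appearing in \cite{DL} coincides with~\eqref{eq:prop} (it does, verbatim), and that the subshift produced by Theorem~\ref{th:main} is aperiodic (forced by the complexity lower bound). Thus the corollary follows as a direct combination of the two cited results.
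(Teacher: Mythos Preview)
Your proposal is correct and matches the paper's approach exactly: the paper offers no separate proof but simply states that the corollary is an immediate consequence of Theorem~\ref{th:main} combined with \cite[Theorem~2]{DL}, and your write-up is a faithful expansion of that one-line deduction. The extra details you supply (constancy of the spectrum via minimality and the aperiodicity check from unbounded complexity) are the standard verifications implicit in invoking \cite[Theorem~2]{DL}.
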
 

\subsection*{Acknowledgment}
We thank David Damanik for bringing this question to our attention and for his helpful remarks during the preparation of this paper.

\section{Background}

\subsection{Symbolic systems} 
\label{sec:background-symbolic}
We work over the alphabet $\CA = \{0,1\}\subset\R$ and consider $\CA^\Z$.  We denote $x\in\CA^\Z$ as $x = (x_n)_{n\in\Z}$ and 
we endow  $\CA^\Z$ with the topology induced by the metric $d(x, y) = 2^{-\inf\{|i|\colon x_i \neq y_i\}}$.  
The {\em left shift} $\sigma \colon \CA^\Z\to\CA^\Z$ is defined by $(\sigma x)_n = x_{n+1}$ for all $n\in\Z$.
If $X\subset\CA^\Z$ is closed and $\sigma$-invariant, then $(X, \sigma)$ is a {\em subshift}. 

If $w = (a_{-n}, \ldots, a_0, \ldots, a_n)\in\CA^{2n+1}$, then the {\em central cylinder set 
$[w]_0$ determined by $w$ }is defined to be 
$$\{x\in X\colon x_j = a_j \text{ for } j=-n, \ldots, n\}
$$
and the {\em one-sided cylinder set $[w]_0^+$ determined by $w$} is defined to be 
$$\{x\in X\colon x_j = a_j \text{ for } j=0, \ldots, n\}.  
$$
If $(X, \sigma)$ is a subshift and $n\in\N$, the {\em words $\CL_n(X)$  of length $n$} 
are defined to be the collection of all $w\in\CA^n$ such that $[w]_0^+\neq\emptyset$, 
and the {\em language $\CL(X)$} of the subshift is the union of all the words: 
$$\CL(X) = \bigcup_{n=1}^\infty \CL_n(X). $$
If $w\in\CL(X)$ is a word, we say that {\em  $u\in\CL(X)$ is a subword of $w$} if $w = w_1uw_2$ for some (possibly empty) words $w_1, w_2\in\CL(X)$.  

For a subshift $(X, \sigma)$, the {\em word complexity $P_X(n)\colon \N\to\N$} is defined to be the number of words of length $n$ in the language: 
$$
P_X(n) = |\CL_n(X)|.
$$

\subsection{Well approximable irrationals}
\label{sec:approximations}
A key ingredient in our construction is the following theorem of V.~S\'os~\cite{sos} (formerly known as the Steinhaus Conjecture). 
\begin{theorem}[The Three Gap Theorem]
 Assume $\alpha\in\R\setminus\Q$ and $n\in\N$, the partition of  the unit circle $\T = \R/\Z$
 determined by the points $\{0, \alpha, 2\alpha, \ldots, (n-1)\alpha\}$, with all points taken $\pmod 1$.  
 Then the subintervals determined by this partition have at most three distinct lengths, and when there are three distinct length, the largest length is the sum of the other two. 
\end{theorem}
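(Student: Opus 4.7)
The plan is to use the fact that the rotation $R(x) := x + \alpha \pmod 1$ is an isometry that ``almost'' preserves the point set $P := \{p_0, p_1, \ldots, p_{n-1}\}$ with $p_i := \{i\alpha\}$. Since $\alpha$ is irrational, these $n$ points are distinct, and they partition $\T$ into $n$ open arcs $A_0, A_1, \ldots, A_{n-1}$, where $A_i$ is the arc running clockwise from $p_i$ to its immediate clockwise neighbor $p_{\pi(i)}$ in the cyclic order.

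The crucial observation is that $R(P) = \{p_1, \ldots, p_n\}$ differs from $P$ only by removing $p_0$ and adding $p_n$. Thus, whenever $i \neq n-1$ and $\pi(i) \neq n-1$, the image $R(A_i)$ has endpoints $p_{i+1}, p_{\pi(i)+1} \in P$ and its interior is disjoint from $P \setminus \{p_0\}$. Either $R(A_i)$ is itself a gap of the partition (so $|A_i|$ equals the length of another gap), or $p_0$ lies in the interior of $R(A_i)$ and splits it into exactly two consecutive gaps of the partition: the arc $A_{\pi^{-1}(0)}$ ending at $p_0$ and the arc $A_0$ starting at $p_0$. The latter case yields the additive relation $|A_i| = |A_{\pi^{-1}(0)}| + |A_0|$.

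From this the theorem follows by a chain argument. The partial function $A_i \mapsto R(A_i)$ on the set of gaps is undefined exactly at the three arcs $\{A_{n-1},\ A_{\pi^{-1}(n-1)},\ A_{i^*}\}$, where $A_{i^*}$ is the unique arc containing $R^{-1}(p_0) = p_{-1}$ and hence whose image is split. Symmetrically, its image misses exactly the three arcs $\{A_0,\ A_{\pi^{-1}(0)},\ A_{j_n}\}$, where $A_{j_n}$ is the unique arc containing $p_n$. Because $\alpha$ is irrational this partial function has no cycles, so its graph decomposes into at most three disjoint chains, each joining one ``source'' arc to one ``sink''. Along each chain $R$ preserves arc length, so at most three distinct arc lengths appear. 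When three distinct lengths appear, the chain terminating at the split arc $A_{i^*}$ starts at $A_{j_n}$ and must have common length $|A_{i^*}| = |A_0| + |A_{\pi^{-1}(0)}|$; while the two shorter chains carry the lengths $|A_0|$ and $|A_{\pi^{-1}(0)}|$. Hence the longest length equals the sum of the other two.

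The main obstacle is precisely the chain bookkeeping: verifying that the three ``source'' arcs and three ``sink'' arcs pair up into the at-most-three chains in a way compatible with the splitting identity, and confirming that when three distinct lengths occur, the ``long'' chain must run from $A_{j_n}$ to $A_{i^*}$. Once the combinatorics of the sources and sinks of the partial-permutation graph is handled, the length analysis and the sum relation follow at once from the identity $|A_{i^*}| = |A_0| + |A_{\pi^{-1}(0)}|$.
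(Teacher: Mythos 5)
The paper does not prove this statement: it is quoted as a known theorem of S\'os (the former Steinhaus conjecture) and used as a black box, so there is no in-paper argument to compare yours against. Judged on its own, your proof is correct and is essentially the standard ``renormalization'' proof: the rotation $R$ is an isometry carrying the point set $P$ to $P\setminus\{p_0\}\cup\{p_n\}$, so it induces a length-preserving partial injection on the set of gaps, undefined at the (at most three) arcs $A_{n-1}$, $A_{\pi^{-1}(n-1)}$, $A_{i^*}$ and missing the (at most three) arcs $A_0$, $A_{\pi^{-1}(0)}$, $A_{j_n}$ from its image; irrationality rules out cycles, so the gaps decompose into at most three constant-length chains, giving at most three lengths. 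The one step you flag but do not actually carry out --- that when three lengths occur the chain ending at the split arc $A_{i^*}$ must begin at $A_{j_n}$ --- is a one-line observation you should include: if that chain began at $A_0$ (resp.\ $A_{\pi^{-1}(0)}$), constancy of length along the chain would give $|A_0|=|A_{i^*}|=|A_0|+|A_{\pi^{-1}(0)}|$, forcing a gap of length zero, which is impossible. Note also that three distinct lengths forces three distinct sinks, so $A_{i^*}\notin\{A_{n-1},A_{\pi^{-1}(n-1)}\}$ and the splitting identity $|A_{i^*}|=|A_0|+|A_{\pi^{-1}(0)}|$ legitimately applies; with that, the three lengths are $|A_0|$, $|A_{\pi^{-1}(0)}|$, and their sum, completing the proof.
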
 
Given an integer $n\geq 1$ and irrational $\alpha$, we refer to the partition determined by the points  $\{0, \alpha, 2\alpha, \ldots, (n-1)\alpha\}$ of the unit circle as the {\em $n$-step partition}, and make use of it for well chosen $\alpha$.  
An irrational real number $\alpha$ is {\em well approximable} if there exists a sequence $\{n_k\}_{k=1}^{\infty}$ of integers 
such that for each $n_k$, the associated $n_k$-step partition 
in the Three Gap Theorem has three distinct lengths and the ratio of the smallest to the largest length in such a partition tends to zero as $k\to\infty$. (This  sequence is obtained as the denominators in the regular continued fraction expansion of $\alpha$, and this can be rephrased as unbounded partial quotients.) Furthermore, 
we can choose the sequence $\{n_k\}_{k=1}^{\infty}$ such that 
the smallest length present in the $n_k$-step partition is not present for in the $(n_k-1)$-step partition. 
 
An irrational that is not well approximable is said to be {\em badly approximable}, 
and the set of badly approximable reals has Lebesgue measure zero. 
Notice that if $\alpha$ is well approximable and $\{n_k\}_{k=1}^{\infty}$ is the associated sequence, then the $(n_k-1)$-step  partition in the Three Gap Theorem has only two distinct lengths and the ratio of their lengths tends to $1$ as $k\to\infty$.

\subsection{Sturmian systems}
\label{sec:sturmian}
To make use of the approximations determined by the Three Gap Theorem, we use Sturmian sequences.  
To define this notion, let $\alpha$ be an irrational real number and consider the partition $\mathcal{P}=\{[0,\alpha),[\alpha,1)\}$ of $[0,1)$ and let $T_\alpha$ denote the rotation $T(x) = x+\alpha\pmod 1$.  For any $x\in[0,1)$ and each $n\in\Z$, define 
$$ 
c_n(x)=\left\{\begin{tabular}{ll} 
$0$ & if $x+n\alpha\pmod1 \in [0, \alpha)$; \\ 
$1$ & otherwise. 
\end{tabular}\right. 
$$ 
Let $X_{\alpha}\subseteq\{0,1\}^{\Z}$ be closure of the set of all sequences of the form 
$$ 
(\dots,c_{-2}(x),c_{-1}(x),c_0(x),c_1(x),c_2(x),\dots). 
$$ 
Then $X_{\alpha}$ is called the {\em Sturmian shift} with {\em rotation angle} $\alpha$.  A classical fact is that 
the system $(X_{\alpha}, \sigma)$ is minimal, uniquely ergodic, and $P_{X_{\alpha}}(n)=n+1$ for all $n\in\N$. 

Moreover, words $w\in\mathcal{L}_{X_{\alpha}}(X)$ correspond to the cells of 
$\bigvee_{i=0}^{|w|-1}T_\alpha^{-i}\mathcal{P}$, and with respect to the unique invariant measure 
$\nu_{\alpha}$, the measure of the cylinder set $[w]_0^+$ is the Lebesgue measure $\lambda$ of the cell of 
$\bigvee_{i=0}^{|w|-1}T_\alpha^{-i}\mathcal{P}$ corresponding to $w$. 
In other words, there is a bijection 
\begin{equation}
\label{eq:biject}
\varphi_{n}\colon\mathcal{L}_n(X_{\alpha})\to\P_{n}
\end{equation}
such that 
for any word $w\in\mathcal{L}_n(X)$, we have 
$$\nu_i([w]_0^+)=\lambda(\varphi_{n}(w)).$$

 In view of the discussion in Section~\ref{sec:approximations}, if $\alpha$ is well approximable, there exists a sequence $\{n_k\}_{k=1}^{\infty}$ such that 
\begin{equation}
\label{eq:well-approx-use} 
\lim_{k\to\infty}\frac{\min\{\nu_{\alpha}([w]_0^+)\colon w\in\mathcal{L}_{n_k-1}(X_{\alpha})\}}{\max\{\nu_{\alpha}([w]_0^+)\colon w\in\mathcal{L}_{n_k-1}(X_{\alpha})\}}=1. 
\end{equation}

Recall that $(X, \sigma)$ is {\em uniquely ergodic} if there exists a unique Borel probability $\sigma$-invariant measure on $X$.  Recasting this definition in terms of the language, the subshift $(X, \sigma)$ is uniquely ergodic if and only if for any $w\in\mathcal{L}(X)$, there exists $\delta\geq0$ such that for any $\varepsilon>0$ there 
is an integer $N\geq 1$ with the property that for all $u\in\mathcal{L}(X)$ with $|u|\geq N$,  we have 
$$ 
\left|\frac{\#\text{ of occurrences of $w$ as a subword of $u$}}{|u|}-\delta\right|<\varepsilon. 
$$ 
In this case, $\delta$ is the measure of the cylinder set $[w]_0^+$ with respect to the unique invariant measure on $X$.

\section{The construction}

We construct a minimal subshift $X\subseteq\{0,1\}^{\Z}$ such that 
\begin{equation}\label{eq1} 
\limsup_{n\to\infty}\frac{P_X(n)}{n}=\infty 
\end{equation} 
and for which there exists an invariant measure $\mu$ supported on $X$ and a sequence $\{n_k\}_{k=1}^{\infty}$ satisfying 
\begin{equation}\label{eq2}
\liminf_{k\to\infty}\min\left\{\mu([w]_0^+)\cdot n_k\colon w\in\mathcal{L}_{n_k}(X)\right\}>0. 
\end{equation} 

\subsection{Setup}
\label{previous}
We fix $\varepsilon = 1/8$ (any value $\leq 1/8$ suffices) and choose a well approximable real numbers $\alpha$ satisfying
\begin{equation}\label{eq:alpha}
\left|\frac{1}{2}-\alpha\right|<\varepsilon. 
\end{equation} 

Let $X_{\alpha}$ denote the Sturmian shift with rotation angle $\alpha$ and let $\nu$ denote the (unique) invariant measure supported on $X_{\alpha}$ (see Section~\ref{sec:sturmian}).  For each $n\in\N$, 
let $\P_{n}$ denote the partition of $[0,1)$ into subintervals whose endpoints are given by the set 
$$ 
\{0,\alpha,2\alpha, \dots,(n-1)\alpha\}, 
$$ 
where, as usual, all points are taken in $[0,1)$, meaning modulo $1$.  

Using~\eqref{eq:well-approx-use}  derived from the well approximability of $\alpha$, 
there exists $n\in\N$ satisfying
\begin{equation}
\label{eq:exist-n}
\frac{\lambda(\text{shortest subinterval in }\P_{n})}{\lambda(\text{longest subinterval in }\P_{n})}>1-\varepsilon
\end{equation} 
(in fact there exist infinitely many such $n$).  
The partition $\P_{n}$ is obtained from the partition 
$\P_{n-1}$ by subdividing one of the subintervals in $\P_{n-1}$ into two pieces. 
 Thus the length of the longest subinterval in $\P_{n-1}$ is at most twice the length of the longest subinterval in $\P_{n}$.  Similarly the length of the shortest subinterval in $\P_{n-1}$ is at least as long as the length of the shortest subinterval in $\P_{n}$.  Therefore we also have 
$$ 
\frac{\lambda(\text{shortest subinterval in }\P_{n-1})}{\lambda(\text{longest subinterval in }\P_{n-1})}\geq\frac{\lambda(\text{shortest subinterval in }\P_{n})}{2\cdot\lambda(\text{longest subinterval in }\P_{n})}>\frac{1}{2}-\frac{\varepsilon}{2}.  
$$ 

We are now ready to begin our construction. 
\begin{equation}\label{eq:fixed}
\text{Fix some $n$ satisfying~\eqref{eq:exist-n}}
\end{equation}
and let  
$\varphi_{n-1}\colon\mathcal{L}_{n-1}(X_{\alpha})\to\P_{n-1}$ and $\varphi_{n}\colon\mathcal{L}_{n}(X_{\alpha})\to\P_{n}$  denote the bijections defined in~\eqref{eq:biject}.  Then
\begin{equation}\label{eq:longer} 
\frac{\min\{\nu([w]_0^+)\colon w\in\mathcal{L}_{n}(X_{\alpha})\}}{\max\{\nu([w]_0^+)\colon w\in\mathcal{L}_{n}(X_{\alpha})\}}>1-\varepsilon 
\end{equation} 
and 
\begin{equation}\label{eq:shorter} 
\frac{\min\{\nu([w]_0^+)\colon w\in\mathcal{L}_{n-1}(X_{\alpha})\}}{\max\{\nu([w]_0^+)\colon w\in\mathcal{L}_{n-1}(X_{\alpha})\}}>\frac{1}{2}-\frac{\varepsilon}{2}. 
\end{equation} 

Since $X_{\alpha}$ is uniquely ergodic, we can choose 
$N\in\N$ such that for any $m\geq N$ and any word $w\in\mathcal{L}_{n-1}(X_{\alpha})\cup\mathcal{L}_{n}(X_{\alpha})$ and any word $u\in\mathcal{L}_m(X_{\alpha})$,  we have 
\begin{equation}\label{eq:frequency} 
\left|\frac{\#\text{ of occurrences of $w$ as a subword of $u$}}{|u|}-\nu([w]_0^+)\right|<\varepsilon. 
\end{equation} 

Since $X_{\alpha}$ is Sturmian, we have $P_{X_{\alpha}}(m)=m+1$ for all $m\in\N$.  Equivalently, this mean that  $P_{X_{\alpha}}(m+1)=P_{X_{\alpha}}(m)+1$ for all $n\in\N$.  In particular, for all $n$ there is a unique word $w\in\mathcal{L}_m(X_{\alpha})$ for which both $w0$ and $w1$ are elements of $\mathcal{L}_{m+1}(X_{\alpha})$.  
Let $w\in\mathcal{L}_{N}(X_{\alpha})$ be the unique word with this property.  Note that for any $m\geq N$, the unique word in $\mathcal{L}_m(X_{\alpha})$ with this property has $w$ as its rightmost subword (of length $|w|$).

Since $X_{\alpha}$ is minimal, 
all sufficiently long words in $\mathcal{L}(X_{\alpha})$ contain every word of length $|w|+1$ 
as a subword and there is a uniform gap $g$ (which depends only on $|w|$) 
between consecutive occurrences of any word in $\mathcal{L}_{|w|+1}(X_{\alpha})$. 
 Let $m\geq N+3g+3|w|$ be sufficiently large that the unique word $u\in\mathcal{L}_m(X_{\alpha})$ for which both $u0$ and $u1$ are in $\mathcal{L}_{m+1}(X_{\alpha})$, has this property.  
 Then the rightmost subword of $u$ of length $|w|$ is $w$ and there is an occurrence of $w0$ within distance $g$ of the left edge of $u$.  
 Define $a$ to be the subword of $u$ that begins with the leftmost occurrence of $w0$ and ends just before the rightmost occurrence of $w$ 
 (meaning we remove the rightmost $|w|$ letters of $u$ to obtain the end of the word $a$).  
 Note that $|a|\geq\max\{2g+2|w|,N\}$ and so~\eqref{eq:frequency} holds for all words in $\mathcal{L}_{|w|}(X_{\alpha})$ and $u=a$ (because its length is at least $N$).  Since $|a|\geq2g+2|w|$, every word in $\mathcal{L}_{|w|}(X_{\alpha})$ occurs as a subword of $a$.  Moreover every subword of $aa$ of length $|w|$ is an element of $\mathcal{L}(X_{\alpha})$, 
since $aw0\in\mathcal{L}_{|aw0|}(X_{\alpha})$ and the leftmost subword of length $|w0|$ in $a$ is $w0$.  Since $X_{\alpha}$ is aperiodic, there exists an integer $e$ such that 
$$ 
A:=\underbrace{aa\cdots a}_{e\text{ times}}\notin\mathcal{L}(X_{\alpha}). 
$$ 
Let $n\geq3|A|+3g+3|w|$ and let $v\in\mathcal{L}_n(X_{\alpha})$ be the unique word for which $v0$ and $v1$ are both elements of $\mathcal{L}_{n+1}(X_{\alpha})$.  
Let $b$ be the subword of $v$ that begins at the leftmost occurrence of $w1$ and ends just before the rightmost occurrence of $w$.  
Then $|b|\geq\max\{2g+2|w|,3|A|\}\geq\max\{2g+2|w|,N\}$ and so~\eqref{eq:frequency} holds for the words $w$ and $u=b$ and every word in $\mathcal{L}_{|w|}(X_{\alpha})$ occurs as a subword of $b$.  
Moreover every subword of length $|w|$ that occurs in $ab$, $ba$, and $bb$ is in $\mathcal{L}_{|w|}(X_{\alpha})$, since $bw0, bw1\in\mathcal{L}(X_{\alpha})$, the leftmost subword of $a$ is $w0$ and the leftmost subword of $b$ is $w1$.  Finally we define two words: 
	\begin{eqnarray*} 
	x&:=&\underbrace{AA\cdots A}_{|b|\text{ times}} \\ 
	y&:=&\underbrace{bb\cdots b}_{|A|\text{ times}}. 
	\end{eqnarray*} 
By construction both of these words are periodic and we let $p$ denote the minimal period of $x$ and let $q$ denote the minimal period of $y$.  These words have the following properties: 
	\begin{enumerate} 
	\item $|x|=|y|$; 
	\item $A$ does not occur as a subword of $y$ (since $y\in\mathcal{L}(X_{\alpha})$ 
	but $A\notin\mathcal{L}(X_{\alpha})$); 
	\item $x$ does not occur as a subword of $yy$ (because $|y|\geq3|A|$ and 
	if $x$ occurred in $yy$ it would force an occurrence of $A$ in $y$); 
	\item $y$ does not occur as a subword of $xx$ (again because such an occurrence would force 
	and occurrence of $A$ in $y$); 
	\item $x$ occurs exactly once as a subword of $xy$.  Namely, $x$ cannot overlap $y$ by at least $|A|$ symbols without forcing an 
	occurrence of $A$ in $y$, and 
	so $x$ must overlap $x$ on more than $|A|\geq2p$ 
	symbols.  
	This means that the occurrence of $x$ (the subword) has to be offset from the beginning of 
	$xy$ by a multiple of $p$, 
	and so if this occurrence of $x$ overlaps $y$ by at least $|w0|$ many 
	symbols, then since $|x|$ is a multiple of $p$, this 
	implies that $y$ has to begin with the word 
	$w0$, but it begins with $w1$, a contradiction.  This means that $x$ (the subword) overlaps $y$ on at 
	most $|w|$ symbols, and so a portion of the start of $w$ (the leftmost subword of $y$) whose length is a multiple of the minimal period of $x$ matches the subword of the same length at the end of $x$.
	\end{enumerate} 
	
Next we define two more words, $s$ and $t$, as follows: 
	\begin{eqnarray*} 
	s&:=&xxyxx; \\ 
	t&:=&xxyyx. 
	\end{eqnarray*} 
Note that all of the words $ss$, $st$, $ts$, and $tt$ contain $xxxy$ at least once as a subword.  Consider where such a subword could occur: 
	\begin{eqnarray*} 
	ss&:=&xxyxxxxyxx; \\ 
	st&:=&xxyxxxxyyx; \\ 
	ts&:=&xxyyxxxyxx; \\ 
	tt&:=&xxyyxxxyyx. 
	\end{eqnarray*} 
We analyze where it can occur in $ss$, 
and the analysis for the other three cases is similar.  
Since $y$ does not occur as a subword of $xx$, the prefix $xxx$ (in $xxxy$) cannot completely overlap the leftmost $y$ in $ss$. 
This means that the farthest to the left that this prefix can occur is if it begins one letter after the beginning of the leftmost $y$ in $ss$.  But since the word $y$ in $xxxy$ cannot be completely contained in the central $xxxx$ of $ss$, the farthest to the left $xxxy$ could occur in $ss$ is to have the $y$ at least partially overlap the rightmost $y$ in $ss$.  Also, since the only place in $xy$ that $x$ can occur is at the leftmost edge, the $y$ in $xxxy$ cannot occur anywhere farther to the right in $ss$ than the rightmost $y$ (otherwise it would force an occurrence of $x$ in $xy$ which would guarantee that one of the subwords $xx$ in $xxxy$ exactly overlaps the rightmost $xy$ in $ss$, which is impossible since $x\neq y$).  Therefore any occurrence of $xxxy$ in $ss$ must have the $y$ in $xxxy$ partially overlap the rightmost $y$ in $ss$, but not extend any farther to the right than this occurrence of $y$.  
 This means the leftmost $x$ in $xxxy$ occurs as a subword of the central $xxxx$ in $ss$, and so it is a multiple of $p$ (the minimal period of the bi-infinite word $\cdots xxxx\cdots$) from the right edge of the central $xxxx$ in $ss$.  
 This means that the $y$ in $xxxy$ overlaps the rightmost $y$ in $ss$ and is offset from the right edge of $y$ by a multiple of $p$.  If this multiple is zero, then we are done.  Otherwise, the $y$ in $xxxy$ partially overlaps the right edge of $xxxx$ by a multiple of $p$ and overlaps the rightmost $y$ in $ss$ by this same multiple of $p$.  Therefore this multiple of $p$ is a period of $y$ and since the leftmost edge of $y$ (of this length) agrees with the $p$-periodic word $xxxx$, then the entire word $y$ is also $p$-periodic and $y=x$, a contradiction.

\begin{lemma} 
Any element $z\in\{0,1\}^{\Z}$ that can be written as a bi-infinite concatenation of the words $s$ and $t$ can be written in a unique way as such a concatenation. 
\end{lemma}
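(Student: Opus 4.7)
The plan is to use the word $xxxy$ as a canonical marker of the boundaries between adjacent factors. The paragraph preceding the lemma establishes that $xxxy$ occurs exactly once in each of $ss$, $st$, $ts$, and $tt$, and in each case at relative position $4|x|$: its first $|x|$-letter block coincides with the last block of the leading factor, while its remaining $3|x|$ letters coincide with the first three blocks of the trailing factor. I will take this as the main input.

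Any decomposition of a bi-infinite word $z$ as a concatenation of $s$'s and $t$'s is recorded by its boundary set $B\subset\Z$, and since each factor has length $5|x|$ this $B$ must be a coset of $5|x|\Z$. The key observation is that $|xxxy|=4|x|<5|x|$, so every occurrence of $xxxy$ in $z$ is contained inside a single pair $u_ku_{k+1}$ of consecutive factors from the decomposition (if $p\in[b,b+5|x|)$ is its starting position with $b\in B$, then $p+4|x|<b+9|x|$ lies inside the next block). Combined with the already-established uniqueness of $xxxy$ within each pair, this forces the set of starting positions of $xxxy$ in $z$ to be exactly $B+4|x|$.

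Suppose now that $z$ admits a second such decomposition, with boundary set $B'$. The same reasoning applied to this second decomposition gives the set of starting positions of $xxxy$ in $z$ as $B'+4|x|$. But this set is a property of $z$ alone, so $B+4|x|=B'+4|x|$, giving $B=B'$. Once the boundary sets agree, each factor is read off as the length-$5|x|$ subword of $z$ sitting between two consecutive boundaries, and since $s\neq t$ this subword uniquely identifies whether the factor is $s$ or $t$. Uniqueness of the decomposition follows.

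The only nontrivial ingredient here is the first one, namely the uniqueness of the $xxxy$-occurrence in each of $ss$, $st$, $ts$, and $tt$. This is precisely the delicate period-and-overlap analysis carried out in the paragraph immediately preceding the lemma (for $ss$ in detail, with the other three cases indicated as analogous); everything else is combinatorial bookkeeping about how this unique marker constrains the boundary lattice.
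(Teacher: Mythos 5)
Your proposal is correct and follows essentially the same route as the paper: both use the fact (established in the preceding paragraph) that $xxxy$ occurs exactly once in each of $ss$, $st$, $ts$, $tt$, always at the same offset, to recover the boundary lattice of any decomposition from the positions of $xxxy$ in $z$, after which the individual factors are read off. Your version is slightly more systematic in characterizing the \emph{full} set of $xxxy$-occurrences as $B+4|x|$ rather than anchoring on a single occurrence near the origin, but the substance is identical.
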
 
A shift with this property is sometimes known as a {\em uniquely decipherable} coded shift. 

\begin{proof} 
Note that $s$ and $t$ have the same lengths and are not the same word.  We have already noted that the word $xxxy$ occurs in each of $ss$, $st$, $ts$, and $tt$ and, moreover, it occurs exactly once in each such word.  If $z$ can be written as a bi-infinite concatenation of the words $s$ and $t$, then there must be an occurrence of $xxxy$ within distance $|ss|$ of the origin.  Choose a way to write $z$ as a concatenation of $s$ and $t$ and mark the locations in $\Z$ where this choice places the beginnings of these words.  Let $0\leq d<|s|$ be the smallest non-negative integer that lies in this set.  Find an occurrence of $xxxy$ within distance $|ss|$ of the origin.  Since $|xxxy|<|s|=|t|$, this occurrence must be contained in one of the words $ss$, $st$, $ts$, or $tt$ that begins from our marked set of integers.  But $xxxy$ occurs exactly once in any such word and its location always places $y$ exactly $3|x|$ symbols from the right of whichever of $ss$, $st$, $ts$, or $tt$ it occurs in.  This allows us to determine where the marked integers in this occurrence of $ss$, $st$, $ts$, or $tt$ are located.  This allows us to read off the sequence of words $s$ and $t$ that were concatenated to produce $z$ by starting from one of the marked integers and looking at blocks of size $|s|$ moving to the right and left. 

Thus, once we find an occurrence of $xxxy$ within distance $|ss|$ of the origin in $z$, the locations (in $\Z$) where the words $s$ and $t$ begin is determined, and once these locations are determined, the bi-infinite sequence of $s$ and $t$ is also determined.  In other words, there is a unique way to write $z$ as such a bi-infinite concatenation. 
\end{proof} 

\begin{lemma}\label{lem:meas-concat} 
Let $Y\subseteq\{0,1\}^{\Z}$ be the subshift consisting of all elements of $\{0,1\}^{\Z}$ that can be written as bi-infinite concatenations of the words $s$ and $t$.  Let $Z\subseteq Y$ be any subshift of $Y$ and let $\mu$ be any $\sigma$-invariant probability measure on $Z$.  
Recall that $n\in\N$ is defined in~\eqref{eq:fixed}.  
Then $\mathcal{L}_{n}(Z)\cup\mathcal{L}_{n-1}(Z)=\mathcal{L}_{n}(X_{\alpha})\cup\mathcal{L}_{n-1}(X_{\alpha})$ and we have 
$$ 
\frac{\min\{\mu([w]_0^+)\colon w\in\mathcal{L}_{n}(Z)\}}{\max\{\mu([w]_0^+)\colon w\in\mathcal{L}_{n}(Z)\}}>1-2\varepsilon 
$$ 
and 
$$ 
\frac{\min\{\mu([w]_0^+)\colon w\in\mathcal{L}_{n-1}(Z)\}}{\max\{\mu([w]_0^+)\colon w\in\mathcal{L}_{n-1}(Z)\}}>\frac{1}{2}-\varepsilon. 
$$ 
\end{lemma}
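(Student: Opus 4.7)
The plan is to establish the language equality first, and then derive the measure bounds by expressing $\mu([w])$ as an asymptotic frequency and invoking the Sturmian frequency estimate $\eqref{eq:frequency}$.

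\emph{Language equality.} I would prove $\mathcal{L}_n(Z) = \mathcal{L}_n(X_\alpha)$ and $\mathcal{L}_{n-1}(Z) = \mathcal{L}_{n-1}(X_\alpha)$ separately. For $\subseteq$, any $z \in Z \subseteq Y$ is a bi-infinite concatenation of $s$ and $t$, and since $s = xxyxx$, $t = xxyyx$, $x = A^{|b|} = a^{e|b|}$, and $y = b^{|A|}$, the same $z$ is also a bi-infinite concatenation of the Sturmian words $a$ and $b$. Because both $s$ and $t$ begin and end with $x$, hence with an $a$, every junction in this finer decomposition is one of $aa$, $ab$, $ba$, or $bb$, and the setup guarantees that every length-$|w|$ subword of each of these lies in $\mathcal{L}(X_\alpha)$. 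Since we may arrange $N \geq n$ (enlarging $N$ if needed), the same holds for length-$n$ and length-$(n-1)$ subwords. For $\supseteq$, the construction also ensures that every word in $\mathcal{L}_{|w|}(X_\alpha)$, and hence in $\mathcal{L}_n \cup \mathcal{L}_{n-1}$, occurs as a subword of $a$, hence of any $z \in Z$.

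\emph{Measure bounds.} Given a $\sigma$-invariant probability measure $\mu$ on $Z$ and a word $w \in \mathcal{L}_n(Z) \cup \mathcal{L}_{n-1}(Z)$, I would invoke the pointwise ergodic theorem: for $\mu$-a.e.\ $z \in Z$ the asymptotic frequency of $w$ in the prefixes of $z$ converges, and its $\mu$-integral equals $\mu([w])$. Partitioning a long window of $z$ along its $a$- and $b$-block structure, this frequency is a convex combination of the occurrence-frequencies of $w$ within $a$ and within $b$, plus a boundary correction of order $|w|/\min(|a|,|b|)$ that is negligible because $|a|, |b| \gg |w|$. Applying $\eqref{eq:frequency}$ with $u = a$ and $u = b$, each of these block-frequencies lies within $\varepsilon$ of $\nu([w])$, so $\mu([w])$ is correspondingly close to $\nu([w])$.

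\emph{Conclusion and main obstacle.} Combining the approximation $\mu([w]) \approx \nu([w])$ with $\eqref{eq:longer}$ and $\eqref{eq:shorter}$ will yield the claimed ratios, with the Sturmian bounds $>1-\varepsilon$ and $>1/2 - \varepsilon/2$ degrading by at most an additional $\varepsilon$. The main obstacle lies precisely in this last error accounting: since $\nu([w]) \sim 1/n$ for $w \in \mathcal{L}_n(X_\alpha)$, a naive additive error $\varepsilon$ in $\eqref{eq:frequency}$ would be far too coarse to preserve the ratio. One must use unique ergodicity of $X_\alpha$ to enlarge $N$ (and hence $|a|, |b|$) so that the block-frequency error is small \emph{relative to} $\min_{w} \nu([w])$, producing an induced multiplicative error on $\mu([w])/\nu([w])$ well below $\varepsilon$.
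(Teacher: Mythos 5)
Your proposal follows essentially the same route as the paper's (two-sentence) proof: the language equality comes from the construction of $a$ and $b$ (every length-$\le|w|$ subword of a concatenation of $a$'s and $b$'s lies in a single block or straddles one junction $aa$, $ab$, $ba$, $bb$, all of which are legal, and every such word occurs in $a$), and the measure bounds come from combining \eqref{eq:longer}, \eqref{eq:shorter}, and \eqref{eq:frequency} via the ergodic theorem and the block decomposition. Your closing observation is well taken and is the one point the paper glosses over: since $\nu([w]_0^+)\approx 1/(n+1)$, the additive error $\varepsilon$ in \eqref{eq:frequency} as literally stated is too coarse to control the ratio, and the fix you propose --- choosing $N$ large enough that the frequency error (and the junction correction of order $|w|/\min(|a|,|b|)$) is small relative to $\min_w\nu([w]_0^+)$ --- is exactly what is needed to make the argument yield the stated bounds.
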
 
\begin{proof} 
By construction of the words $a$ and $b$ (from which $s$ and $t$ are built) our claim about equality of the languages holds.  The claim about measure follows from~\eqref{eq:longer},~\eqref{eq:shorter}, and~\eqref{eq:frequency} combined with the fact that $a,b\in\mathcal{L}(X_{\alpha})$ and $|a|,|b|\geq N$. 
\end{proof} 

\begin{lemma} 
\label{lemma:top}
$h_{top}(Y)=\log(2)/|s|>0$. 
\end{lemma}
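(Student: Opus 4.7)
The plan is to establish matching exponential upper and lower bounds of $\log(2)/|s|$ on $P_Y(n)$, using the unique decipherability just proved. Positivity is then immediate since $|s|$ is a finite positive integer.

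For the lower bound, I fix $k\in\N$ and consider the $2^k$ words of length $k|s|$ of the form $w_1 w_2 \cdots w_k$ with each $w_i\in\{s,t\}$. Each such concatenation lies in $\mathcal{L}_{k|s|}(Y)$, since extending it on both sides by the bi-infinite sequence $\cdots sss \cdots$ produces a bi-infinite concatenation of $s$ and $t$, hence an element of $Y$. Moreover, two distinct binary sequences $(w_1,\dots,w_k)\neq (w_1',\dots,w_k')$ must yield distinct length-$k|s|$ words: otherwise padding both on each side with identical bi-infinite tails of $s$'s would produce a single point of $Y$ admitting two different $\{s,t\}$-decompositions, contradicting the preceding lemma. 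Hence $P_Y(k|s|)\geq 2^k$, which gives
\[
\limsup_{n\to\infty}\frac{\log P_Y(n)}{n}\geq \frac{\log 2}{|s|}.
\]

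For the upper bound, every $u\in\mathcal{L}_n(Y)$ occurs as a factor of some $z\in Y$, and by the preceding lemma $z$ has a unique decomposition into blocks from $\{s,t\}$, all of length $|s|$. The occurrence of $u$ in $z$ is then specified by two pieces of data: an offset $d\in\{0,1,\dots,|s|-1\}$ recording where the leftmost letter of $u$ sits inside its enclosing block, together with the finite sequence in $\{s,t\}$ of the at most $\lceil n/|s|\rceil+1$ consecutive blocks that $u$ meets. Since $u$ is determined by this data,
\[
P_Y(n) \;\leq\; |s|\cdot 2^{\lceil n/|s|\rceil + 1},
\]
so $\limsup_{n\to\infty}\frac{\log P_Y(n)}{n}\leq \log(2)/|s|$. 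Combined with the lower bound, this yields $h_{top}(Y)=\log(2)/|s|>0$.

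The only delicate step is the injectivity claim in the lower bound, since in an arbitrary coded system distinct finite concatenations could in principle coincide; here, however, it reduces at once to unique decipherability via the two-sided $s$-padding trick. Once that is in hand, the remainder of the argument is elementary block counting.
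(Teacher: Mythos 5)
Your proof is correct and follows essentially the same route as the paper's (one-sentence) argument: both obtain the entropy by counting words of length roughly $k|s|$ via the block structure, getting a lower bound of about $2^{k}$ from the distinct $\{s,t\}$-concatenations and an upper bound of about $|s|\cdot 2^{k+O(1)}$ from the fact that the parsing into blocks is determined (the paper cites the marker word $xxxy$, you cite the unique-decipherability lemma that was proved from it). Your write-up is just a more detailed version of the same count, and the injectivity step you flag is in fact immediate here since $|s|=|t|$ and $s\neq t$ force equal concatenations to have equal label sequences block by block.
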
 
\begin{proof} 
The number of distinct words whose length $n$ is any particular multiple of $2|s|$ is at least $|s|\cdot 2^{n/|s|}$ and at most $4|s|\cdot 2^{n/|s|}$ since a word of this length must contain an occurrence of $xxxy$, which tells us how to parse it as a subword of a concatenation of the words $s$ and $t$ (other than perhaps the leftmost and rightmost words in the concatenation). 
\end{proof} 

We fix a subexponentially growing sequence $\{a_n\}_{n=1}^{\infty}$.  It follows from Lemma~\ref{lemma:top} that 
\begin{equation}
\label{eq:grow}
P_{Y}(m)>a_m
\end{equation} 
for all but finitely many $m\in\N$.  Thus, 
\begin{equation}\label{eq:bound}
\text{given the subexponentially growing sequence $\{a_m\}_{m=1}^{\infty}$ and a bound $B\geq|s|$,}
\end{equation}
we can choose  some $m>B$ such that $P_Y(m)>a_m$, and then fix two (distinct) words $u,v\in\mathcal{L}(Y)$, of equal length, that each contain every element of $\mathcal{L}_{m}(Y)$ as a subword.  Finally define 
	\begin{eqnarray} 
	\label{eq:0*}
	0_*&:=&uuvuuvuuvvvvuuvuuvuuv \ ; \\ 
	\label{eq:1*}
		1_*&:=&uuvuuvuuvvvvvvvuuvuuv.  
	\end{eqnarray} 
Arguing as with words that can be written as bi-infinite concatenations of $s$ and $t$, observe that any element of $\{0,1\}^{\Z}$ that can be written as a bi-infinite concatenation of $0_*$ and $1_*$ can be written in a unique way as such a concatenation.  Let $Z$ be the subshift consisting of all elements of of $\{0,1\}^{\Z}$ that can be written as a bi-infinite concatenation of the word $0_*$ and $1_*$.  Then $Z\subseteq Y$, meaning Lemma~\ref{lem:meas-concat} applies to any $\sigma$-invariant probability measure on $Z$.  Furthermore, by~\eqref{eq:grow}, we have that 
\begin{equation}
\label{eq:blank}
P_{Z}(|0_*|)>b_{|0_*|}.
\end{equation}

\subsection{Inflated subshifts} 

\begin{lemma}\label{lem:induction} 
Let $Z\subseteq\{0,1\}^{\Z}$ be a subshift.  Assume there exists an integer $m>1$ and positive constants $C_1 < C_2$ 
such that for any ergodic measure $\mu$ supported on $Z$, we have
$$ 
C_1<\frac{\min\{\mu([w]_0^+)\colon w\in\mathcal{L}_m(Z)\}}{\max\{\mu([w]_0^+)\colon w\in\mathcal{L}_m(Z)\}}<C_2. 
$$ 
Assume that $\beta_0,\beta_1\in\{0,1\}^*$ are two words of equal length such that for any $x\in\{0,1\}^{\Z}$ that can be written as a bi-infinite concatenation of the words $\beta_0$ and $\beta_1$, there is a unique way to write it as such a concatenation.  
Further assume that there is a word $v$ that appears exactly once in each of the concatenations  
$\beta_0\beta_0$, $\beta_0\beta_1$, $\beta_1\beta_0$, and $\beta_1\beta_1$.
Let $X\subseteq\{0,1\}^{\Z}$ be the subshift consisting of all elements of $\{0,1\}^{\Z}$ that can be written as a (unique) bi-infinite concatenation 
$$ 
\cdots\beta_{i_{-2}}\beta_{i_{-1}}\beta_{i_0}\beta_{i_1}\beta_{i_2}\cdots 
$$ 
where $(\dots,i_{-2},i_{-1},i_0,i_1,i_2,\dots)\in Z$.  
If $\mu$ is any ergodic measure supported on $X$, then 
$$ 
\frac{C_1}{4}<\frac{\min\{\mu([w]_0^+)\colon w\in\mathcal{L}_{|\beta_0|\cdot(m+1)}(X)\}}{\max\{\mu([w]_0^+)\colon w\in\mathcal{L}_{|\beta_0|\cdot(m+1)}(X)\}}<4C_2 
$$ 
\end{lemma}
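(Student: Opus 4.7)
The plan is to factor the ergodic measure $\mu$ on $X$ through an induced ergodic measure $\tilde\mu$ on $Z$ via block-parsing, apply the hypothesized ratio bound to $\tilde\mu$ on $\mathcal{L}_m(Z)$, and absorb the boundary ambiguities of a length-$L(m+1)$ word in $X$ into the stated factor of $4$. Writing $L = |\beta_0|$, unique decipherability of $\beta_0, \beta_1$ makes the set of block starts of any $x \in X$ a well-defined $L$-periodic progression in $\Z$, giving a Borel partition $X = X_0 \sqcup \cdots \sqcup X_{L-1}$ where $X_k$ consists of sequences whose least non-negative block start is $k$. Since $\sigma$ cyclically permutes the $X_k$, ergodicity of $\mu$ forces $\mu(X_k) = 1/L$, and the ``block-index reading'' map $\pi \colon X_0 \to Z$ identifies $(X_0, \sigma^L, L\,\mu|_{X_0})$ with $(Z, \sigma, \tilde\mu)$ for some ergodic $\tilde\mu$; the lemma's hypothesis then applies to $\tilde\mu$, so $m_0 := \min\tilde\mu(\mathcal{L}_m(Z)) > C_1 M_0$ for $M_0 := \max\tilde\mu(\mathcal{L}_m(Z))$.

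For $w \in \mathcal{L}_{L(m+1)}(X)$, the length $|w| \geq 2L$ forces $w$ to span at least one complete $\beta_i\beta_j$-junction and so contain the marker word $v$, pinning down a unique phase $k(w) \in \{0,\ldots,L-1\}$ at which $w$ aligns with block boundaries. Parsing $w$ as a length-$k$ suffix of $\beta_{l_{-1}}$, followed by $\beta_{l_0}\cdots\beta_{l_{m-1}}$, followed by a length-$(L-k)$ prefix of $\beta_{l_m}$, the middle indices $l_0,\ldots,l_{m-1}$ are determined by $w$, while $l_{-1}$ and $l_m$ range over compatibility sets $I_{-1}(w), I_m(w) \subseteq \{0,1\}$ of at most two elements each. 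Via the isomorphism with $Z$,
\[
\mu([w]_0^+) = \frac{1}{L}\sum_{(l_{-1},l_m)\in I_{-1}(w)\times I_m(w)}\tilde\mu\bigl([l_{-1}l_0\cdots l_m]_0^+\bigr),
\]
a sum of at most four length-$(m+2)$ cylinder measures of $\tilde\mu$ (collapsing to a single length-$(m+1)$ cylinder when $k(w)=0$). Extending the sum to all of $\{0,1\}^2$ only adds non-negative terms, and by shift-invariance marginalizes to $\tilde\mu([l_0\cdots l_{m-1}]_0^+) \leq M_0$, giving the uniform upper bound $\mu([w]_0^+) \leq M_0/L$. Choosing $w_*$ with middle realizing $m_0$ gives $\min_w \mu([w]_0^+) \leq m_0/L$, while a $k=0$ word whose block-sequence is a maximum-mass length-$(m+1)$ extension of the maximizing length-$m$ $Z$-word has $\mu$-measure $\geq M_0/(2L)$, so $\min/\max \leq 2m_0/M_0 < 2C_2 < 4C_2$.

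The main obstacle is the matching lower bound $\mu([w]_0^+) \geq m_0/(4L)$ for \emph{every} $w$. Since the four summands $\tilde\mu([l_{-1}l_0\cdots l_m]_0^+)$ over $(l_{-1},l_m)\in\{0,1\}^2$ sum to $\tilde\mu([l_0\cdots l_{m-1}]_0^+) \geq m_0$, pigeonhole provides at least one summand of size $\geq m_0/4$. The delicate combinatorial step is to verify that the admissible subset $I_{-1}(w)\times I_m(w)$, intersected with $\mathcal{L}_{m+2}(Z)$, always captures a summand of this order; this is precisely where the unique decipherability of $\beta_0, \beta_1$ and the marker-word structure enter, preventing the inadmissible tuples from absorbing all of the pigeonhole mass. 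Granted this, $\min_w\mu([w]_0^+) \geq m_0/(4L)$, so $\min/\max \geq m_0/(4M_0) > C_1/4$, completing both inequalities.
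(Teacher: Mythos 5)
Your setup is a cleaner, more precise version of the paper's argument: the paper works with a generic point and word frequencies, while you make the phase partition $X=X_0\sqcup\cdots\sqcup X_{L-1}$, the induced ergodic measure $\tilde\mu$ on $Z$, and the identity $\mu([w]_0^+)=\frac{1}{L}\sum_{(l_{-1},l_m)\in I_{-1}(w)\times I_m(w)}\tilde\mu([l_{-1}l_0\cdots l_m]_0^+)$ explicit. Your upper inequality ($<4C_2$, in fact $<2C_2$) is complete and correct. The proof is nevertheless unfinished, because the lower inequality is exactly the step you defer with ``granted this,'' and the pigeonhole argument you sketch cannot close it. Pigeonhole over the four pairs $(l_{-1},l_m)\in\{0,1\}^2$ only guarantees that \emph{some} pair carries mass at least $m_0/4$; the word $w$ does not get to choose that pair. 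The set $I_{-1}(w)\times I_m(w)$ is forced by $w$ itself: if the length-$k$ suffixes of $\beta_0,\beta_1$ differ and the length-$(L-k)$ prefixes differ, then both sets are singletons and $\mu([w]_0^+)=\frac{1}{L}\tilde\mu([j\,l_0\cdots l_{m-1}\,j']_0^+)$ for one specific pair $(j,j')$. The four pairs correspond to four \emph{different} words of length $L(m+1)$, and nothing in the hypotheses --- which constrain only length-$m$ cylinders of $Z$ --- prevents one of these four conditional masses from being an arbitrarily small positive fraction of $\tilde\mu([l_0\cdots l_{m-1}]_0^+)$. The same difficulty already appears in the phase-$0$ case, where $\mu([w]_0^+)=\frac{1}{L}\tilde\mu([l_0\cdots l_m]_0^+)$ is a length-$(m+1)$ cylinder of $Z$, for which the hypothesis supplies no lower bound at all.

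So the ``delicate combinatorial step'' you flag is not a routine verification to be filled in later; it appears to require control on $(m+1)$- and $(m+2)$-cylinders of $Z$ that the lemma as stated does not assume, or else additional input from the specific shifts $Z$ arising in the induction. For what it is worth, the paper's own proof asserts at the corresponding point that the frequency of $u_1$ is at least (one quarter of) the frequency of its middle word, with no more justification than you give, so you have correctly put your finger on the weak point of the argument --- but identifying the gap is not the same as closing it, and as submitted your proposal does not establish the bound $\min_w\mu([w]_0^+)\geq m_0/(4L)$ on which the conclusion $>C_1/4$ rests.
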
 

We note that while this seems like a long list of assumptions on the shift $Z$, these hypotheses are satisfied by the shifts to which we apply our inductive construction.  Starting with the system defined in Section~\ref{previous}.  In our application, 
the shift $Z$ is the shift $Z$ from the preceding section and the word $v$ is taken to be $xxxy$.

\begin{proof} 
Let $\mu$ be an ergodic measure supported on $X$ and let $x\in X$ 
be a generic point for the measure $\mu$.  
Choose $u_1,u_2\in\mathcal{L}_{(m+1)|\beta_0|}(X)$ such that 
$$ 
\mu([u_1]_0^+)=\max\{\mu([w]_0^+)\colon w\in\mathcal{L}_{m|\beta_0|}(X)\} 
$$ 
and 
$$ 
\mu([u_2]_0^+)=\min\{\mu([w]_0^+)\colon w\in\mathcal{L}_{m|\beta_0|}(X)\} 
$$ 
Since $\mu$ is ergodic, 
$$ 
\mu([u_1]_0^+)=\lim_{n\to\infty}\frac{1}{n}\sum_{k=0}^{n-1}1_{[u_1]_0^+}(\sigma^kx) 
$$ 
and 
$$ 
\mu([u_2]_0^+)=\lim_{n\to\infty}\frac{1}{n}\sum_{k=0}^{n-1}1_{[u_2]_0^+}(\sigma^kx). 
$$ 
We analyze occurrences of $u_1$ in $x$, noting that the same analysis applies for occurrences of $u_2$.
Any occurrence of $u_1$  in $x$ occurs in a concatenation of the words $\beta_0$ and $\beta_1$, and since $m\geq2$, the value of $d\in\{0,1,\dots,|\beta_0|-1\}$ where the beginnings of these concatenated words occur (starting from the left edge of $u_1$) are determined entirely by the word $u_1$ itself.  (In other words, if $u_1$ occurs in two different places within $x$, the value of $d$ can not change between occurrences of $u_1$, 
 as by considering where the word $v$ is located within $u_1$, we can locate the beginnings.) 
 Thus each occurrence of $u_1$ occurs as a subword of a concatenation of at most 
 $m+1$ of the words $\beta_0$ and $\beta_1$ and that all but (perhaps) the first and last of the words $\beta_0$ and $\beta_1$ can be determined from the word $u_1$.   
 Therefore there are at most four ways to concatenate the words $\beta_0$ and $\beta_1$ such that $u_1$ 
 occurs as a subword, corresponding to the ambiguity of the edge (first and last) concatenated words and that there are at 
 most two choices for each of these edge words. 
 This means that the asymptotic frequency with which $u_1$ occurs as a subword of $x$ is at least the frequency with which the sequence of $0$'s and $1$'s giving the indices of the $m-1$ 
 non-edge $\beta_i$ words occur in the element of $Z$ corresponding to $x$, it is at most four times as frequent.  
 Since the ratio of the least to the most frequently occurring words of length $m-1$ in $\mathcal{L}(X)$ is 
linear and bounded 
$C_1$ and $C_2$ for all ergodic measures supported on $X$, this also holds for all elements of $X$.  Namely, if 
there were a point in $X$ not satisfying these bounds, then via a standard argument of passing to a limit of the empirical  measures and taking an ergodic component, we would contradict the bounds imposed by $C_1$ and $C_2$.   Thus, we conclude that 
\begin{equation*} 
\frac{C_1}{4}<\frac{\mu([u_1]_0^+)}{\mu([u_2]_0^+)}<4C_2. \qedhere
\end{equation*}
\end{proof} 

\subsection{Induction}
Let $\{a_m\}_{m=1}^{\infty}$ be a subexponentially growing sequence of positive integers, meaning 
$$ 
\limsup_{m\to\infty}\frac{1}{m}\log a_m=0. 
$$ 
We inductively construct a sequence of shifts $X_1,X_2,X_3,\dots$ and ultimately define our subshift $Y$ from Theorem~\ref{th:main}.  Let $n_1\in\N$ be the smallest integer that satisfies~\eqref{eq:exist-n} and let, by taking $n=n_1$ in~\eqref{eq:fixed}, let $X_1\subseteq\{0,1\}^{\Z}$ be the subshift constructed at the end of Section~\ref{previous} (where it was called $Z$).  Let $N$ be the parameter arising in Section~\ref{previous} and let $0_1$ and $1_1$ be the words defined in Equations~\eqref{eq:0*} and ~\eqref{eq:1*} constructed from the sequence $\{m\cdot a_m\}_{m=1}^{\infty}$ and $B\geq N$ in~\eqref{eq:bound} and let $N_1\geq B$ be the parameter $m$ in the sentence following~\eqref{eq:bound}.  Then we have $P_{X_1}(N_1)>N_1\cdot a_{N_1}$. 

Now suppose we have constructed a nested sequence of subshifts 
$$ 
X_1\supseteq X_2\supseteq X_3\supseteq\cdots\supseteq X_k, 
$$ 
a sequence of positive integers $n_1<N_1<n_2<N_2<\cdots<n_k<N_k$, and a sequence of words $0_1,1_1,0_2,1_2,\dots,0_k,1_k$ where $0_i,1_i\in\mathcal{L}(X_i)$ are two words of equal length (and this common length is at least $N_k$).  We suppose that for each $i\leq k$, $X_i$ is the subshift obtained by taking all possible bi-infinite concatenations of the words $0_i$ and $1_i$.  Suppose further that for any $i\leq k$ and any $j\leq i$ we have $P_{X_i}(N_j)>N_j\cdot a_{N_j}$ and 
$$ 
\frac{1}{16}<\frac{\min\{\mu([w]_0^+)\colon w\in\mathcal{L}_{n_j}(X_i)\}}{\max\{\mu([w]_0^+)\colon w\in\mathcal{L}_{n_j}(X_i)\}}<1. 
$$ 
Take $n_{k+1}>|0_k|$ to be an integer satisfying~\eqref{eq:exist-n} and let $N_{k+1} =  |0_*|$ when $n$ 
is chosen to be $n_{k+1}$ in~\eqref{eq:fixed}.  
We apply Lemma~\ref{lem:induction} with $Z=X_{\alpha}$, $m=n_{k+1}$, $C_1=1/4$, $C_2=3/4$, $\beta_0=0_k$, and $\beta_1=1_k$ to produce a new subshift $X_{k+1}$.  
Note that every element of $X_{k+1}$ can be written as a bi-infinite concatenation of $0_k$ and $1_k$, and so $X_{k+1}\subseteq X_k$.  Let $0_{k+1}$ and $1_{k+1}$ be the words in $\mathcal{L}(X_{k+1})$ by ``inflating'' $0_*$ and $1_*$ from Section~\ref{previous}, with parameter $m=n_{k+1}$, using the words $\beta_0=0_k$ and $\beta_1=1_k$. 

Note that since $N_{k+1} = |0_*|$, by
Equation~\eqref{eq:blank} and taking $b_n =na_n$, we have guaranteed that  $P_{X_{k+1}}(N_j)\geq N_ja_{N_j}$ for all $j\leq k+1$.  Finally, by Lemmas~\ref{lem:meas-concat} and~\ref{lem:induction}, we have 
$$ 
\frac{1}{16}<\frac{\min\{\mu([w]_0^+)\colon w\in\mathcal{L}_{n_j}(X_{k+1})\}}{\max\{\mu([w]_0^+)\colon w\in\mathcal{L}_{n_j}(X_{k+1})\}}<1 
$$ 
for all $j\leq k+1$.  

Finally, define 
$$ 
X:=\bigcap_{k=1}^{\infty} X_k.  
$$ 
Then  since the subshifts are nested, $X$ is nonempty.  Since each pattern occurs syndetically, the system $X$ 
is minimal.  Finally, by construction, we obtain a 
subshift satisfying $P_X(N_j)>N_ja_{N_j}$ for all $j\in\N$, in particular 
$$
\limsup_{n\to\infty}\frac{P_X(n)}{a_n}=\infty, 
$$ 
and such that
$$ 
\frac{1}{16}<\frac{\min\{\mu([w]_0^+)\colon w\in\mathcal{L}_{n_j}(X)\}}{\max\{\mu([w]_0^+)\colon w\in\mathcal{L}_{n_j}(X)\}}<1 
$$ 
for all $j\in\N$.  This concludes the proof of Theorem~\ref{th:main}.

\end{document}